\documentclass[a4paper,reqno,11pt]{amsart}
\usepackage[T1]{fontenc}
\usepackage[utf8]{inputenc}
\usepackage{amssymb, amsmath, amsthm, graphicx, enumerate, enumitem, color}
\usepackage[usenames,dvipsnames,svgnames,table]{xcolor}
\usepackage[foot]{amsaddr}

\addtolength{\textwidth}{3 truecm}
\addtolength{\textheight}{1.2 truecm}
\setlength{\voffset}{-.6 truecm}
\setlength{\hoffset}{-1.5 truecm}
\parindent 0mm
\parskip   4mm

\makeatletter
\def\thm@space@setup{
\thm@preskip=4mm
\thm@postskip=0mm
}
\makeatother

\usepackage{hyperref}
\usepackage{xcolor}
\hypersetup{
    colorlinks,
    linkcolor={red!50!black},
    citecolor={blue!50!black},
    urlcolor={blue!80!black}
}

\usepackage{cleveref}

\usepackage{mathtools}

\usepackage{thmtools, thm-restate}

\usepackage{textpos}

\newtheorem{theorem}{Theorem}
\newtheorem{lemma}[theorem]{Lemma}
\newcommand{\td}{\operatorname{td}}

\let\le\leqslant
\let\ge\geqslant

\let\geq\geqslant

\let\epsilon\varepsilon

\sloppy

\makeatletter
\let\old@setaddresses\@setaddresses
\def\@setaddresses{\bigskip\bgroup\parindent 0pt\let\scshape\relax\old@setaddresses\egroup}
\makeatother

\title{Treedepth vs circumference}

\author[M.~Briański]{Marcin Briański}
\address[M.~Briański]{Theoretical Computer Science Department\\
  Faculty of Mathematics and Computer Science\\
  Jagiellonian University\\
  Kraków, Poland}
\email{marcin.brianski@doctoral.uj.edu.pl}

\author[G.~Joret]{Gwena\"el Joret}
\address[G.~Joret]{Computer Science Department \\
  Universit\'e libre de Bruxelles\\
  Brussels, Belgium}
\email{gwenael.joret@ulb.be}

\author[K.~Majewski]{Konrad Majewski}
\address[K.~Majewski]{Institute of Informatics\\
  Faculty of Mathematics, Informatics and Mechanics\\
  University of Warsaw\\
  Poland}
\email{k.majewski@mimuw.edu.pl}

\author[P.~Micek]{Piotr Micek}
\address[P.~Micek]{Theoretical Computer Science Department\\
  Faculty of Mathematics and Computer Science\\
  Jagiellonian University\\
  Kraków, Poland}
\email{piotr.micek@uj.edu.pl} 

\author[M.~T.~Seweryn]{Michał T.\ Seweryn}
\address[M.~T.~Seweryn]{Theoretical Computer Science Department\\
  Faculty of Mathematics and Computer Science\\
  Jagiellonian University\\
  Kraków, Poland}
\email{michal.seweryn@tcs.uj.edu.pl}

\author[R.~Sharma]{Roohani Sharma}
\address[R.~Sharma]{Max Planck Institute for Informatics \\
  Saarland Informatics Campus\\
  Saarbr\"ucken\\
  Germany}
\email{rsharma@mpi-inf.mpg.de}

\thanks{G.\ Joret is supported by a CDR grant from the Belgian National Fund for Scientific Research (FNRS), a PDR grant from FNRS, and by the Wallonia Brussels International (WBI) agency. 
This work is a part of project BOBR (K.~Majewski) that has received funding from the European Research Council (ERC) under the European Union’s Horizon 2020 research and innovation programme (grant agreement No. 948057).
P.\ Micek, M.\ T.\ Seweryn and M.\ Briański are supported by the National Science Center of Poland under grant UMO-2018/31/G/ST1/03718 within the BEETHOVEN program.}

\begin{document}

\begin{textblock}{20}(-1.4, 9.4)
	\includegraphics[width=30px]{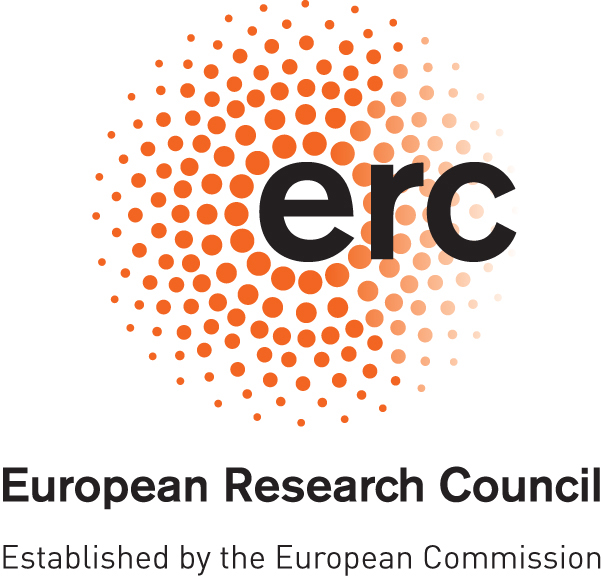}%
\end{textblock}
\begin{textblock}{20}(-1.4, 10)
	\includegraphics[width=30px]{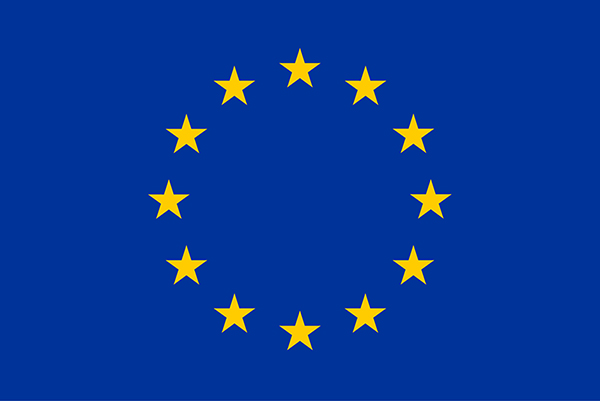}%
\end{textblock}

\begin{abstract}
The circumference of a graph $G$ is the length of a longest cycle in $G$, or $+\infty$ if $G$ has no cycle. 
Birmelé (2003)  showed that the treewidth of a graph $G$ is at most its circumference minus $1$. 
We strengthen this result for $2$-connected graphs as follows: 
If $G$ is $2$-connected, then its treedepth is at most its circumference. 
The bound is best possible and improves on an earlier quadratic upper bound due to Marshall and Wood (2015). 
\end{abstract}

\maketitle

\section{Introduction}

Treewidth is an important graph invariant in structural and algorithmic graph theory. 
We refer the reader to the textbook by Diestel~\cite{Diestel5thEdition} for its definition and for background on the topic. 
A classical  upper bound on the treewidth of a graph is the following. 

\begin{theorem}[Birmelé~\cite{B03}]\label{thm:tw-vs-circumference}
  Every graph has treewidth at most its circumference minus $1$. 
\end{theorem}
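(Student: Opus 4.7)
The strategy is induction on the block decomposition combined with a DFS-based tree decomposition of each $2$-connected piece. If $G$ is disconnected or has a cut vertex, $\tw(G)$ equals the maximum of $\tw(B)$ over blocks $B$ of $G$, and each such $B$ has circumference at most $\circumference(G)$, so it suffices to bound the treewidth of each block. Forests are trivial, so we may assume $G$ is $2$-connected with $c := \circumference(G) < \infty$.

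\textbf{DFS decomposition.} Fix a DFS tree $T$ of $G$ rooted at an arbitrary vertex $r$; every non-tree edge of $G$ is then a back edge joining some vertex to a proper ancestor in $T$. For a vertex $v \neq r$, let $A(v)$ denote the set of proper ancestors of $v$ that are top endpoints of back edges whose bottom endpoint lies in $V(T_v)$, where $T_v$ denotes the subtree of $T$ rooted at $v$. Define bags $B(r) := \{r\}$ and
\[
B(v) := \{v, \operatorname{parent}(v)\} \cup A(v) \quad \text{for } v \neq r,
\]
and take the shape of the decomposition to be $T$ itself. Checking validity is routine: each tree edge $\operatorname{parent}(v)v$ sits in $B(v)$; each back edge $wu$ sits in $B(w)$ since $u \in A(w)$; and for every vertex $u$, the bags containing $u$ form a connected subtree of $T$, obtained by attaching to $u$ the tree paths descending from $u$ to the bottom endpoints of back edges incident to $u$.

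\textbf{Bag-size bound.} It then suffices to prove $|B(v)| \le c$ for every $v$. Let $u_1$ be the shallowest ancestor of $v$ in $B(v) \setminus \{v\}$. If $u_1 = \operatorname{parent}(v)$, then $|B(v)| \le 2 \le c$ holds trivially. Otherwise $u_1 \in A(v)$ is strictly above $\operatorname{parent}(v)$; choose a back edge $w u_1$ with $w \in V(T_v)$. Then the tree path in $T$ from $u_1$ down to $w$, together with the edge $w u_1$, is a simple cycle $C^*$ in $G$. Every element of $A(v)$ lies on the subpath from $u_1$ to $v$, while $\operatorname{parent}(v)$ and $v$ lie on the subpath from $u_1$ to $w$, so every element of $B(v)$ sits on $C^*$. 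Hence $c \ge |V(C^*)| \ge |B(v)|$, giving $\tw(G) \le c - 1$.

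\textbf{Main obstacle.} The most delicate point is the validity check for the tree decomposition --- specifically, verifying that for every vertex $u$ the bags containing $u$ form a connected subtree of $T$, which requires careful bookkeeping of back-edge contributions. The quantitative bound itself is immediate from the single cycle $C^*$ exhibited above.
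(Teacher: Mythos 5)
Your proof is correct, but it takes a genuinely different route from the paper. The paper does not prove Birmel\'e's theorem directly: it derives it as a corollary of the stronger Theorem~\ref{thm:td-vs-circumference}, using the facts that $\tw(G)\le\td(G)-1$ and that treewidth equals the maximum treewidth over blocks. Your argument is instead a direct, self-contained construction of a tree decomposition of width $c-1$ from a DFS tree, which is essentially the standard short proof of Birmel\'e's bound. Both the validity check and the bag-size bound go through: the nodes whose bags contain a vertex $u$ are $u$ itself, the children of $u$ (via the $\operatorname{parent}(v)$ term), and the tree paths from bottom endpoints of back edges into $u$ up to the relevant child of $u$ --- your description omits the children of $u$ not lying on such paths, but they are adjacent to $u$ in $T$, so connectivity is unaffected; and the cycle $C^*$ genuinely contains all of $B(v)$ since the elements of $A(v)$ form a chain of ancestors of $v$ below $u_1$. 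What the comparison buys: your approach is shorter and entirely elementary, but it yields only the treewidth bound; the paper's route through treedepth is harder (it needs the block-tree induction of Lemmas~\ref{lem:path-in-block-tree} and~\ref{lem:path-in-graph}) but proves the strictly stronger statement that a $2$-connected graph has \emph{treedepth} at most its circumference, from which the treewidth bound falls out for free. Note also that your DFS tree is itself an elimination tree of $G$, but possibly a very deep one, so your construction cannot be upgraded to a treedepth bound without substantial new ideas.
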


Here, the \emph{circumference} of a graph \(G\) is the length of a longest cycle in \(G\), or $+\infty$ if $G$ has no cycle. 
This bound is best possible, as witnessed by complete graphs. 
In this paper, we show that `treewidth' can be replaced with `treedepth' in the above theorem, provided that $G$ is $2$-connected:

\begin{theorem}\label{thm:td-vs-circumference}
  Let $G$ be a \(2\)-connected graph. 
  Then, each edge of $G$ is contained in a cycle whose length is at least the treedepth of $G$. 
  Consequently, $G$ has treedepth at most its circumference. 
\end{theorem}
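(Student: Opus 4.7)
The plan is to prove the stronger edge-statement of the theorem by induction on $|V(G)|$; the bound $\td(G) \leq \circumference(G)$ then follows by applying it to any edge of $G$ (or trivially if $G$ has no edge). The base case $|V(G)| = 3$ reduces to $G = K_3$, whose unique triangle of length $3 = \td(K_3)$ contains every edge.

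For the inductive step, fix a 2-connected $G$ with $|V(G)| \geq 4$ and an edge $e = uv$. My primary strategy is to find a vertex $w \in V(G) \setminus \{u,v\}$ such that $G - w$ remains 2-connected, and apply the induction hypothesis to $G - w$ with the edge $e$. One would invoke the standard structural fact that every 2-connected non-cycle graph has such a removable vertex; the case $G = C_n$ is handled directly since $\td(C_n) = 1 + \lceil \log_2 n \rceil \leq n$ and every edge lies on the unique $n$-cycle. Since $\td(G - w) \in \{\td(G) - 1, \td(G)\}$, induction yields a cycle $C$ in $G - w$ through $e$ of length at least $\td(G) - 1$. If $\td(G - w) = \td(G)$, we are done. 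Otherwise, I would augment $C$ by one edge using $w$: since $G$ is 2-connected, $w$ has at least two neighbors outside itself, and if two of them $a, b$ are adjacent on $C$, replacing the edge $ab$ of $C$ by the length-two path $a, w, b$ yields a cycle of length $|C| + 1 \geq \td(G)$ through $e$.

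The main obstacle is ensuring that the extension step can always be carried out. The hardest cases are when no removable $w \notin \{u,v\}$ has the right treedepth behavior, and when $w$ exists but no two of its neighbors happen to be adjacent on the inherited cycle $C$. To handle the latter, I would strengthen the inductive hypothesis: rather than producing some cycle through $e$, demand a cycle through $e$ that also visits a prescribed auxiliary vertex tailored to $w$, so that the rerouting through $w$ is guaranteed. A complementary tool, useful when vertex deletion always strictly decreases the treedepth (as for $G = K_n$), is edge contraction: contract an edge $f \neq e$ with $G/f$ 2-connected, exploit that $\td(G/f) \geq \td(G) - 1$, and lift the resulting cycle back to $G$ through the contracted vertex to recover a unit of length. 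Orchestrating these cases so that they cover every 2-connected $G$ and every edge $e$ is the technical core of the proof.
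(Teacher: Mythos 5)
There is a genuine gap --- in fact several, and you acknowledge as much by deferring ``the technical core of the proof.'' The most concrete problem is that your key structural fact is false: it is not true that every $2$-connected graph other than a cycle has a vertex $w$ with $G-w$ still $2$-connected. Take the theta graph consisting of two vertices joined by three internally disjoint paths, each with at least two internal vertices: deleting either branch vertex leaves a tree, and deleting any internal vertex leaves a vertex of degree $1$. So no vertex at all is removable, let alone one avoiding $\{u,v\}$. (The correct classical statement, due to Chartrand, Kaugars and Lick, requires minimum degree at least $3$.) Beyond that, the augmentation step is not established in the critical case $\td(G-w)=\td(G)-1$: the neighbours of $w$ need not lie on the inherited cycle $C$, let alone be consecutive on it, and your proposed fix (forcing $C$ through one prescribed auxiliary vertex) does not ensure that two neighbours of $w$ become adjacent on $C$. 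The contraction variant has the analogous unresolved issue: the cycle through $e$ in $G/f$ need not pass through the contracted vertex, and even when it does, lifting it back to $G$ gains a unit of length only if the two cycle-neighbours of the contracted vertex attach to different ends of $f$, which is not guaranteed.

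The paper avoids all of this by never trying to preserve $2$-connectivity under deletion. Instead it proves a path statement: for any edge $ab$ of a $2$-connected graph $G$ there is an $a$--$b$ path of length at least $\td(G-b)\ge\td(G)-1$, which closes up with the edge $ab$ into the desired cycle. That path is built by passing to $G-b$ (which is merely connected), walking along a carefully chosen path $B_0x_1B_1\cdots x_mB_m$ in its block tree for which $\sum_i \td(B_i-x_i)\ge\td(G-\{a,b\})$, and recursing inside each ($2$-connected or $K_2$) block to produce an $x_i$--$x_{i+1}$ path of length at least $\td(B_i-x_i)$; $2$-connectivity of $G$ is used only to guarantee that $b$ has a neighbour in the last block with which to re-enter $b$. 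If you want to salvage your approach, you would need either a correct reduction (e.g.\ via ear decompositions or Tutte's contractible-edge theory) or to switch to a decomposition, like the block-tree one, that does not require the smaller graph to remain $2$-connected.
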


Treedepth is a key invariant in the `sparsity theory' for graphs initiated by Ne{\v{s}}et{\v{r}}il and Ossona de Mendez~\cite{NOdM-book}, with several algorithmic applications. 
It is defined as follows. 
An \emph{elimination forest} of a graph \(G\) is a rooted forest consisting of trees \(T_1, \ldots, T_p\) such that
the sets \(V(T_1), \ldots, V(T_p)\) partition the set \(V(G)\) and for each edge \(xy \in E(G)\), the vertices
\(x\) and \(y\) belong to one tree \(T_i\) and in that tree one of them is an ancestor of the other.
The \emph{vertex-height} of an elimination forest is the maximum number of vertices on a root-to-leaf path in
any of its trees. The \emph{treedepth} of a graph \(G\), denoted \(\td(G)\), is the minimum vertex-height of an elimination forest of \(G\), and an elimination forest realizing this minimum is called \emph{optimal}. 

It is well-known that every graph has treewidth at most its treedepth minus $1$ (see e.g.\ \cite{BGHK95}).  
Also, it is known that the treewidth of a graph is equal to the maximum treewidth of its blocks (see Section~\ref{sec:prelim} for the definition of blocks). 
Hence, Theorem~\ref{thm:td-vs-circumference} implies Theorem~\ref{thm:tw-vs-circumference}.  
Theorem~\ref{thm:td-vs-circumference} is also an improvement on earlier quadratic bounds on the treedepth, as we now explain. 
Dirac~\cite{D52} proved that, in a $2$-connected graph with circumference $k$, every path has at most $\lceil \frac{k^2}{2}\rceil$ vertices. 
This in turn implies an upper bound of $\lceil \frac{k^2}{2}\rceil$ on the treedepth of such a graph. 
The latter bound on treedepth was subsequently improved to $\lfloor\frac{k}{2}\rfloor(k-1)+1$ by Marshall and Wood~\cite{MW13}. 
The bound in Theorem~\ref{thm:td-vs-circumference} is best possible, again because of complete graphs. 
The $2$-connectivity assumption is essential, because otherwise treedepth is not bounded by any function of the circumference: if $G$ is an $n$-vertex path to which an edge is added to create a triangle, then $G$ has circumference $3$ but treedepth at least $\log_2 n$.

We conclude this introduction with an algorithmic application of our result. 
Given an integer $d\geq 1$, 
Chen {\it et al.}~\cite{ChenEtAl21} designed a data structure for maintaining an optimal elimination forest of a dynamic graph $G$ with worst case $2^{O(d^2)}$ update time, under the promise that the treedepth of $G$ never exceeds $d$. 
Here, the graph $G$ is {\em dynamic} in the sense that edges can added or removed, one at a time. 
An update time of $2^{O(d^2)}$ is a natural barrier in this context, because the best known FPT algorithm for deciding whether an $n$-vertex graph $G$ has treedepth at most $d$ runs in time $2^{O(d^2)} \cdot n$, see~\cite{ReidlEtAl14}. 
Any improvement on the $2^{O(d^2)}$ update time would lead to a corresponding improvement of the latter result, by adding all edges of $G$ one at a time.  
One application of the above result that is developed in~\cite{ChenEtAl21} is as follows: Given an integer $k\geq 1$, there is a data structure for 
answering queries of the following type on a dynamic graph $G$: {\it Does $G$ contain a cycle of length at least $k$?} 
Their data structure answers these queries in constant time and has an amortized update time of $2^{O(k^4)} + O(k \log n)$, assuming access to a dictionary on the edges of $G$.  
(Note that here $G$ is not required to have treedepth at most $k$, it is an arbitrary graph.) 
As can be checked in their proof, it turns out that the $2^{O(k^4)}$ term in the latter result comes from their $2^{O(d^2)}$ bound mentioned previously combined with the fact that the treedepth $d$ of a $2$-connected graph with circumference $k$ is $O(k^2)$. 
Using Theorem~\ref{thm:td-vs-circumference} instead in their proof  reduces the amortized update time down to $2^{O(k^2)} + O(k \log n)$.  
This solves an open problem from~\cite{ChenEtAl21}.

\section{Preliminaries}
\label{sec:prelim}
Let \(G\) be a connected graph with at least two vertices.
A vertex \(x\) in $G$ is called a \emph{cutvertex} of \(G\) if \(G - x\) is disconnected,
and an edge \(e\) in $G$ is called a \emph{bridge} of \(G\) if \(G - e\) is disconnected.
The graph \(G\) is \emph{\(2\)-connected} if \(G\) does not have a cutvertex and \(G\) has at least three vertices.
A \emph{block} of \(G\) is a maximal connected subgraph \(B \subseteq G\)
such that \(B\) does not have a cutvertex. Each block of \(G\) is either a bridge (together with its ends),
or a \(2\)-connected subgraph of \(G\).
Every vertex of \(G\) that is not a cutvertex of \(G\) belongs to exactly one block of \(G\).
The \emph{block tree} of \(G\) is the tree whose nodes are the cutvertices and blocks of \(G\),
and in which two nodes are adjacent if and only if one of them is a cutvertex \(x\) and the other
is a block \(B\) such that $x$ is a vertex of $B$.

Note that \(\td(G) \le \td(G - x) + 1\)  for every \(x \in V(G)\), since we can obtain an elimination forest
of \(G\) from an optimal elimination forest of \(G - x\) by attaching \(x\) as a common root above all trees
in the forest. Moreover, a graph has treedepth at most \(2\) if and only if each of its components is a star,
i.e.\ a graph isomorphic to \(K_{1, n}\) for some \(n \ge 0\). Hence, every \(2\)-connected graph has treedepth
at least \(3\).

\section{The Proof}
\begin{lemma}\label{lem:path-in-block-tree}
  Let \(G\) be a connected graph on at least two vertices and let \(x_0 \in V(G)\).
  Then, for some \(m \ge 0\), there exist blocks \(B_0, \ldots, B_m\) and cutvertices \(x_1, \ldots, x_m\) of \(G\)
  such that \(B_0 x_1 B_1 x_2 \cdots x_m B_m\) is a path in the block tree of \(G\) with \(x_0 \in V(B_0)\) and
  \[
    \sum_{i=0}^m \td(B_i - x_i) \ge \td(G - x_0).
  \]
\end{lemma}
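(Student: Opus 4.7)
The plan is to prove the lemma by induction on $|V(G)|$. The base case is when $G$ consists of a single block, in which case we take $m = 0$ and $B_0 = G$, so the sum trivially equals $\td(G - x_0)$. For the inductive step, assume $G$ has at least two blocks (so $G$ has a cutvertex) and split into two cases according to whether $x_0$ is a cutvertex of $G$.

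When $x_0$ is a cutvertex, let $B^{(1)}, \ldots, B^{(s)}$ with $s \ge 2$ be the blocks of $G$ containing $x_0$, and for each $j$ let $H_j$ be the union of blocks in the subtree of the block tree rooted at $B^{(j)}$, when the block tree of $G$ is rooted at $x_0$. Any two $H_j, H_{j'}$ share only $x_0$, so $\td(G - x_0) = \max_j \td(H_j - x_0)$. Pick $j^*$ achieving the max; since $s \ge 2$, we have $|V(H_{j^*})| < |V(G)|$. In $H_{j^*}$ the vertex $x_0$ lies in a unique block (namely $B^{(j^*)}$), so applying the induction hypothesis to $H_{j^*}$ with root $x_0$ produces a path starting at $B^{(j^*)}$ which lifts directly to a path in the block tree of $G$.

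When $x_0$ is not a cutvertex, let $B_0$ be the unique block containing $x_0$ and $y_1, \ldots, y_k$ (with $k \ge 1$) the cutvertices of $G$ in $B_0$. For each $i$ let $G_i$ be the union of blocks in the subtree of the block tree rooted at $y_i$, when the block tree of $G$ is rooted at $B_0$. Combining an optimal elimination forest of $B_0 - x_0$ with optimal elimination forests of each $G_i - y_i$ attached below the corresponding $y_i$ yields $\td(G - x_0) \le \td(B_0 - x_0) + \max_i \td(G_i - y_i)$. Pick $i^*$ achieving the max. To ensure that prepending $B_0, y_{i^*}$ to an inductive path produces a simple path, we descend to the grandchild level of the block tree: let $D_1, \ldots, D_l$ be the blocks of $G$ containing $y_{i^*}$ other than $B_0$, and for each $j$ let $H_j$ be the union of blocks in the subtree rooted at $D_j$. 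Then $\td(G_{i^*} - y_{i^*}) = \max_j \td(H_j - y_{i^*})$; for $j^*$ achieving this max, $y_{i^*}$ lies in a unique block of $H_{j^*}$, namely $D_{j^*}$. The induction on $H_{j^*}$ with root $y_{i^*}$ thus produces a path starting at $D_{j^*}$ that avoids $y_{i^*}$ as a later node. Prepending $B_0, y_{i^*}$ gives a simple path in the block tree of $G$ with treedepth sum at least $\td(B_0 - x_0) + \td(G_{i^*} - y_{i^*}) \ge \td(G - x_0)$.

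The main subtlety lies in the second case: if one naively recurses on $G_{i^*}$, the resulting path could have $y_{i^*}$ as its first cutvertex (when $y_{i^*}$ is itself a cutvertex of $G_{i^*}$), and then prepending $B_0, y_{i^*}$ would revisit $y_{i^*}$. Recursing one level deeper, on $H_{j^*}$, sidesteps this issue because $y_{i^*}$ is no longer a cutvertex of $H_{j^*}$ and hence cannot reappear on the inductive path.
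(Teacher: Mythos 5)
Your proof is correct and follows essentially the same route as the paper's: induction with the same two-case split on whether $x_0$ is a cutvertex, and the same elimination-forest gluing argument giving $\td(G - x_0) \le \td(B_0 - x_0) + \max_i \td(G_i - y_i)$. The only difference is your explicit descent to the grandchild level $H_{j^*}$ in the second case to guarantee the concatenated sequence is a simple path; the paper instead recurses directly on $G_{i^*}$ with root $y_{i^*}$ and leaves implicit that the resulting path avoids $y_{i^*}$ (which its construction ensures, since its own first case strips $y_{i^*}$ before building the path), so your extra step is a legitimate and slightly more careful treatment of the same point rather than a different approach.
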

\begin{proof}
  Let \(\mathcal{T}\) denote the block tree of \(G\).
  We prove the lemma by induction on the number of nodes in \(\mathcal{T}\).
  In the base case, \(\mathcal{T}\) has just one node corresponding to the unique block \(B_0 = G\), and the trivial path \(B_0\) satisfies the lemma.
  For the inductive step, assume that \(\mathcal{T}\) has more than one node.
  We split the argument depending on whether \(x_0\) is a cutvertex of \(G\) or not.

  First suppose that \(x_0\) is a cutvertex of \(G\).
  Let \(\mathcal{T}_1,\ldots, \mathcal{T}_\ell\) denote the components of the forest \(\mathcal{T} - x_0\).
  For each \(j\in\{1, \ldots, \ell\}\), let \(G_j\) denote the union of all
  blocks of $G$ that are vertices in \(\mathcal{T}_j\). Note that each \(\mathcal{T}_j\) is the block tree of \(G_j\)
  and has less nodes than \(\mathcal{T}\).
  Furthermore, the components of \(G-x_0\) are \(G_1-x_0,\ldots, G_\ell-x_0\), so
  \(
  \td(G - x_0)  = \max_{1\le j \le \ell}\td(G_j - x_0).
  \)
  Fix an index \(j \in \{1, \ldots, \ell\}\) such that \(\td(G - x_0) = \td(G_j - x_0)\).
  By the induction hypothesis applied to \(G_j\) and \(x_0\), there is a path \(B_0 x_1 B_1 \cdots x_m B_m\) in \(\mathcal{T}_j\) (and in \(\mathcal{T}\))
  with \(x_0 \in V(B_0)\) and
  \[
    \sum_{i=0}^m \td(B_i - x_i) \ge \td(G _j- x_0) = \td(G-x_0),
  \]
  so the path satisfies the lemma.

  Next, suppose that \(x_0\) is not a cutvertex of \(G\).
  Let \(B_0\) be the unique block of \(G\) containing \(x_0\).
  Let \(\mathcal{T}_1\), \ldots, \(\mathcal{T}_\ell\) be the components of \(\mathcal{T} - B_0\).
  For each \(j\in\{1, \ldots, \ell\}\), let \(y_j\) be the neighbor of \(B_0\) in \(\mathcal{T}\) which
  belongs to \(\mathcal{T}_j\), and let \(G_j\) denote the subgraph of \(G\) obtained as the union of all the blocks of $G$ that are vertices in \(\mathcal{T}_j\). This way, each \(y_j\) is the only common vertex of
  \(B_0\) and \(G_j\), and the block tree of \(G_j\) is either \(\mathcal{T}_j\) or \(\mathcal{T}_j - y_j\)
  depending on whether \(y_j\) has degree at least \(3\) in \(\mathcal{T}\) or not.

  We claim that
  \[
    \td(G-x_0)\le \td(B_0-x_0)+\max_{1 \le j \le \ell} \td(G_j - y_j).
  \]
  We prove this by constructing an elimination forest of \(G-x_0\)
  whose vertex-height is at most the right hand side of the above inequality.
  Take an optimal elimination forest for \(B_0 - x_0\), and for each \(j\in\{1, \ldots, \ell\}\), append all
  trees of an optimal elimination forest for \(G_j - y_j\) right below the vertex \(y_j\) (picturing the root at the top). The vertex-height
  of the resulting forest is at most \(\td(B_0-x_0)+\max_{1 \le j \le \ell} \td(G_j - y_j)\), and it is indeed an elimination forest since the only vertex of \(B_0\) adjacent to vertices of \(G_j - y_j\)
  is \(y_j\).
  Hence, the claimed inequality is satisfied. 

  Fix an index \(j \in \{1, \ldots, \ell\}\) such that \(\td(G-x_0)\le \td(B_0-x_0)+\td(G_j - y_j)\),
  let \(x_1 = y_j\)
  and let the path \(B_1 x_2\cdots x_m B_m\) be the result of applying the induction hypothesis to \(G_j\) and \(x_1\).
  Now, the path \(B_0x_1 \cdots x_m B_m\) satisfies the lemma; indeed, \(x_0 \in V(B_0)\) and
  \[
    \sum_{i=0}^m \td(B_i - x_i) = \td(B_0 - x_0) + \sum_{i=1}^m \td(B_i - x_i) \ge \td(B_0 - x_0) + \td(G_j - y_j) \ge \td(G - x_0). \qedhere
  \]
\end{proof}

In the following lemma, the {\em length} of a path is defined as its number of edges.

\begin{lemma}\label{lem:path-in-graph}
  Let \(G\) be a graph that is either $K_2$ or $2$-connected, and let \(a, b\) be two distinct vertices of \(G\). 
  Then, there exists an \(a\)--\(b\) path in \(G\) of length at least \(\td(G - b)\). 
\end{lemma}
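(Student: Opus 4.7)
The plan is to prove this by strong induction on $|V(G)|$. The base case $G = K_2$ is immediate: the edge $ab$ is an $a$-$b$ path of length $1$, and $\td(G - b) = 1$ since $G - b$ is a single vertex. For the inductive step with $G$ being $2$-connected, I would apply Lemma~\ref{lem:path-in-block-tree} to the connected graph $G - b$ with root $x_0 := a$, producing a path $B_0 x_1 B_1 \cdots x_m B_m$ in the block tree of $G - b$ such that $a \in V(B_0)$ and $\sum_{i=0}^{m} \td(B_i - x_i) \ge \td(G - b - a)$. Writing $x_0 := a$, the vertices $x_0, x_1, \ldots, x_m$ are pairwise distinct (the $x_i$ with $i \ge 1$ are distinct cutvertex-nodes on a tree path, and the lemma's recursion never selects the starting vertex as a subsequent cutvertex). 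If $B_m$ is not a leaf of the block tree of $G - b$, I would extend the path by appending further cutvertex/block pairs in the block tree until it reaches a leaf; every added term $\td(B_i - x_i)$ is nonnegative, so the summation bound is preserved. Hence we may assume $B_m$ is a leaf block.

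Next, $2$-connectivity of $G$ forces $V(B_m) \setminus \{x_m\}$ (interpreted as $V(B_0) \setminus \{a\}$ when $m = 0$) to contain a neighbor $y$ of $b$. Indeed, if $m \ge 1$ and no such $y$ existed, every edge leaving $V(B_m) \setminus \{x_m\}$ in $G$ would go to $x_m$ or stay within $B_m$, making $x_m$ a cutvertex of $G$; and if $m = 0$, $b$ would have $a$ as its only neighbor, contradicting $\deg(b) \ge 2$. Each $B_i$ is $K_2$ or $2$-connected with $|V(B_i)| \le |V(G)| - 1$, so the inductive hypothesis applied to $B_i$ yields, for $0 \le i \le m - 1$, a path $P_i$ in $B_i$ from $x_{i+1}$ to $x_i$ of length at least $\td(B_i - x_i)$, and a path $P_m$ in $B_m$ from $y$ to $x_m$ of length at least $\td(B_m - x_m)$ (in the $m = 0$ case, a single $P_0$ from $y$ to $a$ of length at least $\td(B_0 - a)$). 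Since consecutive blocks on the block-tree path share exactly the cutvertex between them and non-consecutive blocks are vertex-disjoint, the concatenation $P_0 \cup P_1 \cup \cdots \cup P_m$ is an $a$-$y$ path in $G$; appending the edge $yb$ yields an $a$-$b$ path of length at least
\[
\sum_{i=0}^{m} \td(B_i - x_i) + 1 \ge \td(G - b - a) + 1 \ge \td(G - b),
\]
using the standard inequality $\td(H) \le \td(H - v) + 1$.

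The hardest part is engineering that the final block $B_m$ contains a neighbor of $b$ distinct from the entry cutvertex $x_m$: this is precisely what forces the extension of the Lemma~\ref{lem:path-in-block-tree} output to a leaf of the block tree, and it is where the $2$-connectivity of $G$ is exploited to rule out $x_m$ being a cutvertex of $G$. The remaining pieces—verifying distinctness of the $x_i$'s and that the concatenated walk is a simple path—are routine consequences of the bipartite block-tree structure, and the asymmetry between treating $x_i$ as the "endpoint to be removed" in $\td(B_i - x_i)$ versus $x_{i+1}$ as the "other endpoint" of $P_i$ is exactly what matches the indexing in the lemma's summation.
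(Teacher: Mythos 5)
Your proof follows essentially the same route as the paper's: induction on $|V(G)|$, applying Lemma~\ref{lem:path-in-block-tree} to $G-b$ with $x_0=a$, extending the block-tree path to a leaf so that $2$-connectivity of $G$ yields a neighbour of $b$ in $B_m - x_m$, concatenating the inductively obtained paths $P_i$, and finishing with the edge to $b$ together with $\td(G-\{a,b\}) \ge \td(G-b)-1$. The differences are purely expository: you spell out the distinctness of the $x_i$ and the pairwise intersections of the blocks, which the paper asserts in a single line.
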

\begin{proof}
  We prove the lemma by induction on the number of vertices in \(G\).
  In the base case \(|V(G)| = 2\), \(G\) consists of a single edge between \(a\) and \(b\),
  \(\td(G - b) = 1\), and \(G\) itself is an \(a\)--\(b\) path of length \(1\), so the lemma holds.
  For the inductive step, $|V(G)| \geq 3$ and thus $G$ is 2-connected. 
  Therefore, \(G - b\) is connected.
  Let the path \(B_0x_1\cdots x_m B_m\) be the result of applying Lemma~\ref{lem:path-in-block-tree} to the graph \(G - b\)
  with \(x_0 = a\). After possibly extending the path, we may assume that \(B_m\) is a leaf in the block tree of \(G - b\)
  (after rooting the block tree at \(B_0\)).
  Since \(G\) is \(2\)-connected, \(x_m\) is not a cutvertex of \(G\), so \(b\) has a neighbor in \(B_m - x_{m}\).
  Let \(x_{m+1}\) be such a neighbor.
  Hence, for each \(i\in \{0, \ldots, m\}\), \(x_i\) and \(x_{i+1}\) are distinct vertices of \(B_i\).
  By the induction hypothesis, for each \(i\in \{0, \ldots, m\}\) we can choose an \(x_i\)--\(x_{i+1}\) path \(P_i\) in \(B_i\) with \(|E(P_i)| \ge \td(B_i - x_i)\).
  By our choice of the path \(B_0x_1\cdots x_m B_m\) and by Lemma~\ref{lem:path-in-block-tree}, 
  \[
    \sum_{i=0}^m |E(P_i)| \ge \sum_{i=0}^m \td(B_i - x_i) \ge \td((G - b) - x_0) = \td(G - \{a, b\}) \ge \td(G - b) - 1.
  \]
  As \(x_0 = a\), the desired \(a\)--\(b\) path can be obtained as the union of the paths \(P_0\), \ldots, \(P_m\) and the edge \(x_{m+1}b\).
\end{proof}

\begin{proof}[Proof of Theorem~\ref{thm:td-vs-circumference}]
  Let \(G\) be a \(2\)-connected graph, and let \(ab\) be any edge of \(G\). 
  Since \(G\) is \(2\)-connected,  \(\td(G) \ge 3\). 
  By Lemma~\ref{lem:path-in-graph}, \(G\) contains an \(a\)--\(b\) path \(P\) of length at least \(\td(G-b) \ge \td(G) - 1\). 
  Note that $P$ does not contain the edge $ab$ since $P$ has length at least $2$.  
  Hence, \(P + ab\) is a cycle of length at least \(\td(G)\) containing $ab$.   
\end{proof}

\section*{Acknowledgments}

This research was started at the Structural Graph Theory workshop in Gułtowy (Poland) in June~2022 organized by Andrzej Grzesik, Marcin Pilipczuk, and Marcin Witkowski. 
We thank the organizers and the other workshop participants for creating a productive working atmosphere. 
We thank Micha\l{} Pilipczuk for pointing out to us the algorithmic application mentioned in the introduction. 
We are grateful to the two anonymous referees for their helpful comments. 

\bibliographystyle{abbrv}
\bibliography{bibliography}

\end{document}